\renewcommand{\geq}{\geqslant}
\renewcommand{\leq}{\leqslant}
\begin{document}

\title[The wave resolvent for compactly supported perturbations of Minkowski space]{\large{The wave resolvent for compactly supported \\ perturbations  of static spacetimes}\smallskip}

\author{}
\author[]{\normalsize Micha{\l} \textsc{Wrochna} \& Ruben \textsc{Zeitoun}}
\address{}
\address{Laboratoire AGM (UMR 8088, CNRS), CY Cergy Paris Universit\'e, France, \phantom{CY Cergy Paris Universit\'e, 2 av.~Adolphe Chauvin} \phantom{O} Freiburg Institute for Advanced Studies (Frias), University of Freiburg, Germany}

\email{michal.wrochna@cyu.fr}
\address{Laboratoire AGM (UMR 8088, CNRS), CY Cergy Paris Universit\'e, France}
\email{ruben.zeitoun@ens-lyon.fr}

\begin{abstract} In this note, we consider the wave operator  $\square_g$ in the case of globally hyperbolic, compactly supported perturbations of static spacetimes.  We give an elementary proof of the essential self-adjointness of $\square_g$ and of uniform microlocal estimates for the resolvent  in this setting.  This  provides a model for studying Lorentzian spectral zeta functions which is particularly simple, yet sufficiently general for locally deriving  Einstein equations from a spectral Lagrangian action.
\end{abstract}

\maketitle

\section{Introduction}

\subsection{Motivation} Let $P=\square_g$ be  the wave operator on a Lorentzian manifold $(M,g)$. It was shown by Vasy \cite{vasyessential} that if $(M,g)$ is a \emph{non-trapping Lorentzian scattering space} then $\square_g$ is essentially self-adjoint in the sense of the canonical $L^2(M,g)$ space. This result was then generalized by Nakamura--Taira \cite{nakamurataira,Nakamura2022,Nakamura2022a}  to \emph{long-range perturbations of Minkowski space}, higher order operators and \emph{asymptotically static spacetimes} with compact Cauchy surface. In consequence, in each of these settings one can define complex powers $(\square_g-i \varepsilon)^{-\alpha}$  by functional calculus for all $\varepsilon>0$.

In the first situation, it was shown in \cite{Dang2020} that under the extra hypothesis that $n\geq 4$ is even and $(M,g)$ is \emph{globally hyperbolic}, the Schwartz kernel of $(\square_g-i \varepsilon)^{-\cv}$ has for $\Re\cv>\frac{n}{2}$ a well-defined on-diagonal restriction $(\square_g-i \varepsilon)^{-\cv}(x,x)$, which extends to a meromorphic function of $\cv\in\cc$ (called the \emph{Lorentzian spectral zeta function density}).   Furthermore, the residues can be expressed in terms of the metric $g$, in particular: 
\beq\label{thepole}
\lim_{\varepsilon\to 0^+}\res_{\cv=\frac{n}{2}-1} \left(\square_g- i \varepsilon\right)^{-\cv}(x,x)=  \frac{R_g(x)}{{i}6(4\pi)^{\n2} \Gamma\big(\frac{n}{2}-1\big) }, 
\eeq
where $R_g(x)$ is the scalar curvature at $x\in M$.  Since the variational principle $\delta_g R_g=0$ is equivalent to vacuum Einstein equations and the l.h.s.~refers to spectral theory, this gives a \emph{spectral action} (or strictly speaking, Lagrangian) for gravity.

The proofs of essential self-adjointness and formula \eqref{thepole} rely on \emph{microlocal radial estimates} \cite{melrosered,Vasy2013,GHV,vasygrenoble,vasyessential}, which are nowadays broadly used in hyperbolic problems. The non-expert reader might however not be familiar with the  required formalism, nor with the various technical issues that arise from the combination of microlocal and global aspects (even the definition of non-trapping Lorentzian scattering spaces requires some familiarity). 

In this note, our objective  is to present a much simpler model in which it is possible to give more elementary proofs.  This is motivated first of all by pedagogical reasons, but also by the need of having a toy model for testing various ideas that go beyond formula \eqref{thepole}.

The easiest case is without doubt the class of \emph{ultra-static} spacetimes $(M,g)$ (Min\-kow\-ski space being the primary example). In this situation,  the wave operator $\square_g$ is of the form  $\p_t^2-\Delta_h$ for some $t$-independent Riemannian metric $h$. Essential self-adjointness is then almost immediate (provided that $\Delta_h$ is essentially self-adjoint), and it can also be easily proved for  more general \emph{static} metrics (see  Derezi\'nski--Siemssen \cite{derezinski}) in which case there are extra multiplication operators in the expression for $\square_g$. The proof of \eqref{thepole}  simplifies   as well, at least for ultra-static metrics \cite{Dang2020}. However, this type of assumptions is in practice too restrictive because it narrows down the allowed metric variations to time-independent ones. 

This leads us to consider   \emph{compactly supported perturbations} of static spacetimes.  Such perturbations are indeed sufficient for formulating a variational principle and for the purpose of illustrating propagation phenomena arising in greater generality. On the other hand, the assumption that the perturbation has compact support  allows us to largely bypass the asymptotic analysis,  and we can give proofs based almost exclusively on variants of Hörmander's classical propagation of singularities theorem.

\subsection{Main result and sketch of proof} More precisely, let $(Y,h)$ be a Riemannian  metric of dimension $n-1$ (where $n\geq 2$), and let $(M,g_0)$ be $M=\rr\times Y$ equipped with a Lorentzian metric of the form
$$
\bea
g_0&= \beta \,dt^2 - h = \beta^2(y) dt^2-h_{ij}(y) dy^{i} dy^j,
\eea
$$
for some positive $\beta\in \cf(Y)$. A metric of this form is called \emph{static}, or more precisely, \emph{standard static} (see e.g.~\cite{Sanchez2005} for more remarks on the terminology). In the special case $\beta=1$ the metric is said to be \emph{ultra-static}; the latter is the natural Lorentzian analogue of a Riemannian product-type metric.

 Let $g$ be another smooth Lorentzian metric on $M$. We make the following assumptions.

\begin{assumption}\label{hypothesis} We assume that:
\ben
\item the Riemannian manifold $(Y,h)$ is complete;
\item  $g$ is a compactly supported perturbation of $g_0$, i.e.
\beq
\supp (g- g_0) \mbox{ is compact};
\eeq
\item there exists a constant $C>0$ such that $C < \beta(y)< C^{-1}$ for all $y\in Y$;
\item  $(M,g_0)$ and $(M,g)$ are globally hyperbolic spacetimes.
\een
\end{assumption}

We recall that a Lorentzian manifold $(M,g)$ is a \emph{globally hyperbolic spacetime} if it is time oriented and  there exists a  Cauchy surface, i.e.~a  closed subset of $M$ intersected exactly once by each maximally extended time-like curve. We remark that when $(M,g_0)$ is (for instance)   Minkowski space, then global hyperbolicity of the perturbed spacetime $(M,g)$ is equivalent to a non-trapping condition, see \cite[Prop.~4.3]{GWfeynman}.

Let $\square_g$ be the wave operator, or d'Alembertian on $(M,g)$, i.e.~the   Laplace--Beltrami operator for the Lorentzian metric $g$. More explicitly, denoting $\module{g}=\module{\det g}$ for brevity, we have
$$
\bea
\square_g=\module{g(x)}^{-\frac{1}{2}}\partial_{x^j} \module{g(x)}^{\frac{1}{2}}g^{jk}(x)\partial_{x^k}
\eea
$$
where we sum over repeated indices.  In this setting, we prove the following result.

\bet\label{theorem1}  Assume Hypothesis \ref{hypothesis}. Then the wave operator 
$\square_g$ is essentially self-adjoint on $\ccf$ in $L^2(M,g)$.
\eet

Furthermore, we show    \emph{uniform microlocal resolvent estimates} for the wave operator $\square_g$ (strictly speaking, its closure). In \cite{Dang2020} they are a key ingredient in the analysis of complex powers of $\square_g$. We give here an analogue in our setting.

\bet\label{theorem2} Assume Hypothesis \ref{hypothesis}.  Then the wave resolvent $(\square_g-z)^{-1}$ has Feynman wavefront set. More precisely, let $s\in \rr$, $\varepsilon>0$ and $\theta\in \open{0,\pi/2}$.  Then for $\module{\arg  z - \pi/2} < \theta$, $\module{z}\geq \varepsilon$, the uniform operator wavefront set of  $(\square_g-z)^{-1}$ of order $s$ and weight $\bra z\ket^{-\12}$  (see Definition \ref{defrrr})   satisfies 
$$
\wfl{12}  \big( ( \square_g-z)^{-1} \big)\subset  \Lambda,
$$
where $\Lambda$ is the (primed) Feynman wavefront set (see Definition \ref{deffey}).
\eet

This type of estimates is used in \cite{Dang2020} to show that the resolvent and complex powers of $\square_g$ are sufficiently well approximated by a \emph{Hadamard parametrix},  which in turn can be used to extract the scalar curvature $R_g$ (see \cite{Dang2022} for a brief review). That subsequent analysis is completely general, and so by combining  Theorem \ref{theorem2} with  \cite[\S\S4--8]{Dang2020}    we obtain the following result (see also \cite{Dang2021} for further consequences).

\begin{corollary} Assume Hypothesis \ref{hypothesis}. Then the identity \eqref{thepole} holds true in even dimension $n=\dim M\geq 4$.
\end{corollary}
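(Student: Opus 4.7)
My approach is to transport the arguments of \cite[\S\S4--8]{Dang2020} to the present setting. Those arguments deduce \eqref{thepole} from only two non-local properties of $\square_g$: essential self-adjointness in $L^2(M,g)$ and a uniform Feynman wavefront set bound for the resolvent in a complex sector around the positive imaginary axis. The first is Theorem \ref{theorem1} and the second is Theorem \ref{theorem2}, so in principle the proof amounts to checking that no further hypothesis of \cite{Dang2020} is actually used.

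The concrete steps, in order, would be as follows. First, use Theorem \ref{theorem1} to define $(\square_g - i\varepsilon)^{-\alpha}$ by functional calculus, and note that for $\Re \alpha > n/2$ the Schwartz kernel has a pointwise restriction to the diagonal; a Mellin representation then expresses this restriction as a contour integral of the resolvent $(\square_g - z)^{-1}$ in the sector appearing in Theorem \ref{theorem2}. Second, construct a local Hadamard-type parametrix $H_z$ from the usual transport coefficients $u_k$ and explicit elementary functions of the squared Lorentzian geodesic distance; since the Hadamard recursion is purely local and algebraic, this step is insensitive to the compactly supported perturbation and carries over verbatim. Third, use Theorem \ref{theorem2} together with the known wavefront set of $H_z$ to show that $(\square_g - z)^{-1} - H_z$ is smoothing on the appropriate microlocal region uniformly in $z$, with the $\bra z \ket^{-1/2}$ weight providing the correct quantitative behavior in the spectral parameter. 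Fourth, substitute the parametrix into the Mellin integral and perform the meromorphic continuation in $\alpha$, at which point the residue at $\alpha = n/2 - 1$ is computed locally by evaluating the coincidence limit of the Hadamard coefficient $u_1$, known classically to equal $R_g(x)/6$.

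The step I expect to be most delicate is the third, namely passing from the uniform operator wavefront set bound of Theorem \ref{theorem2} to the quantitative uniform smoothing of the remainder required by \cite[\S\S6--7]{Dang2020}. In the original paper this was handled within the calculus of Lorentzian scattering pseudodifferential operators, whereas here we are outside that framework, so some care is needed to reinterpret the microlocal object of Definition \ref{defrrr} and to match it with the output of the Hadamard parametrix construction. Once this translation is in place, the $\varepsilon\to 0^+$ limit and the explicit numerical constant in \eqref{thepole} follow essentially verbatim from \cite[\S8]{Dang2020}, completing the proof.
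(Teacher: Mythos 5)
Your proposal takes essentially the same route as the paper: the corollary is obtained precisely by combining Theorem \ref{theorem1} and Theorem \ref{theorem2} with the Hadamard parametrix and meromorphic continuation machinery of \cite[\S\S4--8]{Dang2020}, which the paper asserts is completely general once essential self-adjointness and the uniform Feynman wavefront set bound are available. The translation issue you flag in your third step is the one point the paper addresses explicitly, in the remark following Definition \ref{defrrr}, where the uniform operator wavefront set used here is noted to differ from that of \cite{Dang2020} only in allowing the localizing operators $B_i(z)$ to depend on $z$.
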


  We remark that while our assumptions are certainly restrictive,  our results are not exclusively special cases of \cite{vasyessential,nakamurataira,Dang2020,Nakamura2022,Nakamura2022a} because we allow for more general behaviour in the spatial directions. Together with the recent work \cite{Nakamura2022a}, this provides further evidence for Derezi\'nski's conjecture \cite{Derezinski2019} that essential self-adjointness may hold true on a large class of \emph{asymptotically static} spacetimes  (with possibly general behaviour in the spatial directions). We  conjecture that the statement of Theorem \ref{theorem2} would remain valid as well.
  
 \subsection{Structure of paper}  Essential self-adjointness, i.e.~Theorem \ref{theorem1}, is proved in  \sec{section1}, preceded by various preliminaries on propagation of singularities.  Theorem \ref{theorem2} is proved in \sec{section2}; that       section also contains the necessary background on operator wavefront sets.

\section{Essential self-adjointness}  \label{section1}

\subsection{Preliminaries on  self-adjointness} \label{sec2.1} Let us first consider the ultra-static case $\beta=1$. Let
$$
P_0=\p_{t}^2 - \Delta_h
$$ 
be the unperturbed wave operator, i.e.~the wave operator on the static spacetime $(M,g_0)$. In that case there is an  argument that gives its essential self-adjointness immediately.

 \begin{lemma} $P_0$ is essentially self-adjoint  on $C_{\rm c}^\infty(M)$ in $L^2(M,g_0)$.
 \end{lemma}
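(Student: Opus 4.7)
The idea is to exploit the product structure of the ultra-static metric and reduce essential self-adjointness to two classical one-variable facts. Since $\beta=1$, the volume form of $g_{0}$ factorises as $dt\,d\mathrm{vol}_{h}$, so $L^{2}(M,g_{0})$ is canonically unitarily equivalent to the Hilbert tensor product $L^{2}(\mathbb{R}_{t})\otimes L^{2}(Y,\mathrm{vol}_{h})$. Under this identification, $P_{0}=\partial_{t}^{2}\otimes 1-1\otimes\Delta_{h}$.

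First I would recall the two relevant essential self-adjointness results on the factors. The operator $\partial_{t}^{2}$ is essentially self-adjoint on $C^{\infty}_{\mathrm{c}}(\mathbb{R})$, as one sees immediately by conjugating with the Fourier transform to multiplication by $-\tau^{2}$. The Laplace--Beltrami operator $\Delta_{h}$ is essentially self-adjoint on $C^{\infty}_{\mathrm{c}}(Y)$ because $(Y,h)$ is complete; this is the classical Chernoff--Strichartz theorem, whose proof ultimately rests on finite propagation speed for the Riemannian wave equation on $Y$.

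I would then invoke the standard fact (see e.g.~Reed--Simon, Theorem VIII.33) that if $A$ and $B$ are essentially self-adjoint on cores $D_{A}\subset H_{1}$ and $D_{B}\subset H_{2}$, then $A\otimes 1+1\otimes B$ is essentially self-adjoint on the algebraic tensor product $D_{A}\otimes_{\mathrm{alg}} D_{B}$, a consequence of the joint spectral resolution of the two commuting closures. Applied here, this yields essential self-adjointness of $P_{0}$ on $C^{\infty}_{\mathrm{c}}(\mathbb{R})\otimes_{\mathrm{alg}} C^{\infty}_{\mathrm{c}}(Y)$. To upgrade the core to all of $C^{\infty}_{\mathrm{c}}(M)$, one uses that this algebraic tensor product is contained in $C^{\infty}_{\mathrm{c}}(M)\subset \mathrm{Dom}\, P_{0}^{*}$: any closed symmetric restriction of $P_{0}^{*}$ lying between the already self-adjoint closure $\overline{P_{0}|_{D_{A}\otimes D_{B}}}$ and $P_{0}^{*}$ must coincide with the former.

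I do not anticipate any real obstacle in this ultra-static case; the only non-elementary ingredient is the Chernoff--Strichartz theorem, which is standard and used here purely as a black box ensuring essential self-adjointness of $\Delta_{h}$ under completeness.
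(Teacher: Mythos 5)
Your proposal is correct and follows essentially the same route as the paper: essential self-adjointness of $\partial_t^2$ on $C^\infty_{\rm c}(\mathbb{R})$ and of $\Delta_h$ on $C^\infty_{\rm c}(Y)$ (Chernoff, using completeness of $(Y,h)$), combined with the Reed--Simon tensor-product theorem, followed by an upgrade of the core to $C^\infty_{\rm c}(M)$. Your maximal-symmetry argument for the last step is a perfectly valid way to phrase the core upgrade that the paper handles by a density remark.
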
 
  \begin{proof} We quote  the argument from \cite{derezinski} for the reader's convenience. We know that $D_t^2$ is essentially self-adjoint on $C^\infty_\c(\mathbb R)$  in $L^2(\rr)$, and $\Delta_h$ is essentially self-adjoint on $C^\infty_\c(Y)$ in $L^2(Y,h)$ \cite{Chernoff1973}.  
   Therefore by \cite[\S VIII.10]{ReedSimonI}, $P_0=-D_t^2\otimes \one -\one\otimes \Delta_h$ is essentially self-adjoint on the algebraic tensor product of $ C^\infty_\c(\mathbb R)$ with $C^\infty_\c(Y)$, which is dense in $C^\infty_\c(M)$ in  $L^2(M,dt^2+h)=L^2(M,dt^2-h)=L^2(M,g_0)$. 
  \end{proof}

Denoting also by $P_0$ the closure, the resolvent $(P_0-z)^{-1}$ exists  for $z\in \cc\setminus \rr$.   

Let us denote by $L_0$ the closure of minus the Laplace--Beltrami operator $\p_t^2 + \Delta_h$ on the complete Riemannian metric $dt^2+h$. We use it to introduce a global Sobolev space of order $s\in\rr$:
$$
H^s(M)\defeq (\one+L_0)^{-\frac{s}{2}} L^2(M,g_0),
$$ 
 %where $\bra \lambda \ket=(1+\module{\lambda}^2)^\12$,
  i.e.~the norm is given by  $\norm{ u }_{H^s}= \norm{  (\one+ L_0 )^{\frac{s}{2}} u   }_{L^2}$ in terms of the norm of $L^2(M,g_0)$. We will also frequently write $L^2(M)$ instead of $L^2(M,g_0)$ for the sake of brevity. Since  $P_0$ commutes with   $L_0$, for all $m\in \rr$ we can extend  the resolvent to an operator $(P_0-z)^{-1}\in B( H^{m}(M), H^{m}(M))$ which satisfies $(P_0-z) (P_0-z)^{-1}=\one$ on $H^m(M)$. By a direct computation one can check the formula
\beq\label{eq:reso}
\big((P_0-z)^{-1} f\big)(t) = -\12   \int_{\rr} \frac{e^{-i\module{t-s}\sqrt{-\Delta_h -z}}}{\sqrt{-\Delta_h -z} }  f(s) ds,
\eeq
for $\Im z >0$ and $f\in L^2(M)$, where the r.h.s.~is defined using Fourier transform and functional calculus.  

Let us now focus on the wave operator $\square_g$ for the perturbed metric $g$. Let $U: L^2(M,g_0)\to L^2(M,g)$ be the multiplication operator by $\module{g}^{-\frac{1}{4}}\module{g_0}^{\frac{1}{4}}$, and let
$$
P:=  U^*  \square_g U.
$$
Then,  $\supp (P-P_0)$ is compact, and  since $U$ is bounded and boundedly invertible, essential self-adjointness of $\square_g$ in $L^2(M,g)$ is equivalent to essential self-adjointness of $P$ in $L^2(M,g_0)$. 

  Recall that the standard criterion for essential self-adjointness says that it suffices to show the implication
\beq
\forall u\in L^2(M) \mbox{ s.t. } (P \pm i ) u = 0, \ u=0,
\eeq
where $(P \pm i ) u=0$ is meant in the sense of distributions. While the two conditions with different signs are needed, they are largely analogous so we will only consider the `$-$' case. 

The basic argument consists in writing for all $u\in L^2(M)$  such that $(P-i)u=0$,
$$
2 i \norm{u}^2_{L^2} = ( P u | u)_{L^2} - ( u | Pu  )_{L^2}.
$$
If $u \in H^2(M)$, by integration by parts the latter expression vanishes, and we conclude in that case $u=0$.  For this reason it suffices to prove 
 \beq\label{criterion}
 \forall u\in L^2(M) \mbox{ s.t. } (P \pm i ) u = 0, \ u\in H^2(M).
 \eeq
 
 As shown by  Nakamura--Taira \cite{nakamurataira}, in the case of compactly supported perturbations, global aspects can be dealt with relatively easily.
   
We denote by $\Psi^m(M)$ the set of pseudo-differential operators of order $m\in \rr$ on $M$ (in the sense of the general pseudo-differential calculus on manifolds, see e.g.~\cite[\S4.3]{shubin}). 

\begin{proposition} \label{nakamura} Assume $\beta=1$. Let  $k\in\nn_{\geq 0}$   and suppose $u\in L^2(M)\cap H^{k+1}_\loc(M)$  satisfies $(P-i)u=0$. Then $u\in H^k(M)$.
\end{proposition}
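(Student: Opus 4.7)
My plan is to localize $u$ away from $\supp(P-P_0)$ and then invert on the complement using the $L^2$-resolvent of the \emph{unperturbed} operator $P_0$, which in the ultra-static case $\beta=1$ is explicit and respects the Sobolev scale $H^m(M)$.

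First I would pick $\psi\in\ccf$ equal to $1$ on a neighborhood of $\supp(P-P_0)$ and set $\chi\defeq 1-\psi$. Since $\chi$ vanishes near $\supp(P-P_0)$, the operator $\chi(P-P_0)$ acts trivially on $u$, so $\chi Pu=\chi P_0u$. Combined with the equation $(P-i)u=0$ and the Leibniz rule, this yields the key identity
$$
(P_0-i)(\chi u) \;=\; [P_0,\chi]\,u,
$$
where $[P_0,\chi]$ is a first-order differential operator with smooth coefficients supported in the compact set $\supp d\psi$. The local regularity hypothesis $u\in H^{k+1}_{\loc}(M)$ then guarantees that $[P_0,\chi]u$ is compactly supported and of class $H^k(M)$.

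Next, because $\chi u\in L^2(M)$ and $(P_0-i)(\chi u)\in L^2(M)$ distributionally, the essential self-adjointness of $P_0$ proved above forces $\chi u$ into the domain of the closure $\overline{P_0}$, giving
$$
\chi u \;=\; (P_0-i)^{-1}\bigl([P_0,\chi]u\bigr).
$$
Since $(P_0-i)^{-1}$ is bounded on $H^k(M)$ (because it commutes with $L_0$, as recalled in \sec{sec2.1}), this identity delivers $\chi u\in H^k(M)$. The remaining piece $\psi u$ is compactly supported and of class $H^{k+1}_{\loc}(M)$, hence in $H^k(M)$; summing gives $u=\chi u+\psi u\in H^k(M)$.

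The only delicate step is the promotion of $\chi u$ from a distributional solution of $(P_0-i)(\chi u)\in L^2$ to an element of the domain of $\overline{P_0}$—this is precisely the characterization of essential self-adjointness as the equality of the minimal and maximal realizations of $P_0-i$ on $L^2$. Everything else is routine bookkeeping, and the compactness of $\supp(P-P_0)$ is what eliminates any need for asymptotic analysis: the commutator term lies in $H^k(M)$ globally, with no decay condition at infinity to verify.
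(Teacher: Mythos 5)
Your argument is correct, but it takes a genuinely different route from the paper's proof (which follows Nakamura--Taira \cite{nakamurataira}). The decomposition is the same — your $\chi$ plays the role of the paper's $\psi$, and both proofs reduce matters to showing $\chi u\in H^k(M)$ from the identity $(P_0-i)(\chi u)=[P_0,\chi]u\in H^k_\c(M)$. From there you simply \emph{invert}: essential self-adjointness of $P_0$ (established independently by the tensor-product argument, so there is no circularity with the self-adjointness of $P$ being proved) identifies the minimal and maximal realizations, hence $\chi u\in \Dom \overline{P_0}$ and $\chi u=(P_0-i)^{-1}[P_0,\chi]u$; the conclusion then follows from the boundedness of $(P_0-i)^{-1}$ on the global scale $H^m(M)$, which the paper records in \sec{sec2.1} as a consequence of the commutation of $P_0$ with $L_0$. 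The paper avoids invoking the free resolvent altogether: it runs a quadratic-form argument, computing $\Im\big(N_\varepsilon^{2k}(\psi u)\,|\,P_0(\psi u)\big)_{L^2}$ with the regularized weights $N_\varepsilon=(\one+L_0)^{\12}(\one+\varepsilon L_0)^{-\12}$ and passing to $\varepsilon\to 0^+$ by monotone convergence to get $\Vert N_0^{k}(\psi u)\Vert_{L^2}\leq\Vert N_0^{k}Bu\Vert_{L^2}$. Your version is shorter and perfectly legitimate in this setting, where $(P_0-i)^{-1}$ is explicit and manifestly preserves the $H^m(M)$ scale; the commutator method buys robustness, since it only needs $[N_\varepsilon,P_0]=0$ and positivity of $L_0$ rather than good mapping properties of an unperturbed resolvent, which is why it extends to the more general situations of \cite{nakamurataira}. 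One step worth making explicit in your write-up is the passage from ``compactly supported and in $H^{k}_\loc$'' to membership in the global space $H^k(M)=(\one+L_0)^{-\frac{k}{2}}L^2(M)$, both for $[P_0,\chi]u$ and for $\psi u$; this identification of local and global Sobolev norms on compact sets is standard, and the paper's own proof relies on exactly the same point when it asserts $Bu\in H^k(M)$.
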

\begin{proof} The proof of {\cite[Prop.~C.1]{nakamurataira}} applies verbatim to our case; we repeat it for the reader's convenience. Set $N_\varepsilon=(\one+L_0)^{\12}(\one+\varepsilon L_0)^{-\12}$, $\varepsilon\geq 0$. For $\varepsilon>0$, $N_\varepsilon\in \Psi^0(M)\cap B(L^2(M))$, hence $N_\varepsilon^{2k} u\in L^2(M)\cap H^{k+1}_\loc(M)$. Let $\psi\in \cf(M)$ be such that $\psi=0$ in a neighborhood of $\supp(P-P_0)$ and $\psi=1$ on the complement of some compact set.

Then,
\beq\label{pzp}
P_0(\psi u)=P(\psi u)= \psi Pu+[P,\psi]u=-i\psi u+Bu,
\eeq
where $B:=[P,\psi]$ is of order $1$ and has compactly supported coefficients. The latter implies $Bu\in H^k(M)$, so by \eqref{pzp} we get $P_0(\psi u)\in L^2(M)$. 
We can now compute 
\begin{equation}
\bea
2i\Im (N_\varepsilon^{2k}(\psi u)|P_0(\psi u))_{L^2}&= 2i \Im  (N_\varepsilon^{2k}(\psi u)|-i\psi u+Bu)_{L^2}\\
 &= 2\Vert N_\varepsilon^{k}(\psi u)\Vert_{L^2}^2+2i \Im (N_\varepsilon^{2k}(\psi u) | Bu)_{L^2}.
\eea
\end{equation}
On the other hand,  $[N_\varepsilon,P_0]=0$, $N_\varepsilon$ is bounded and $P_0(\psi u)\in L^2(M)\cap H^{k}_\loc(M)$, so $P_0(N_\varepsilon^{2k}(\psi u))=N_\varepsilon^{2k}(P_0(\psi u))\in L^2(M)\cap H^{k}_\loc(M)$. In consequence, 
\begin{equation}
2i \Im (N_\varepsilon^{2k}(\psi u)|P_0(\psi u))_{L^2}=(N_\varepsilon^{2k}(\psi u)|P_0(\psi u))_{L^2}-(P_0(\psi u)|N_\varepsilon^{2k}(\psi u))_{L^2}=0.
\end{equation}
Thus, we have
\begin{equation}
\Vert N_\varepsilon^{k}(\psi u)\Vert_{L^2}^2=\module{\Im  (N_\varepsilon^{2k}(\psi u),Bu)_{L^2}} \leq \Vert N_\varepsilon^{k}(\psi u)\Vert_{L^2}\Vert   N_\varepsilon^{k}B u\Vert_{L^2},
\end{equation}
hence $\Vert N_\varepsilon^{k}(\psi u)\Vert_{L^2} \leq \Vert   N_\varepsilon^{k}Bu\Vert_{L^2}$. 
Since $L_0\geq 0$, $N_\varepsilon \leq N_{\varepsilon'}$ for $\varepsilon'<\varepsilon$.  Moreover, $N^{k}_0 B u\in L^2(M)$ since $Bu\in H^k(M)$. Therefore, by monotone convergence, as $\varepsilon\to 0^+$ we get $\Vert N^{k}_0(\psi u)\Vert_{L^2} \leq \Vert   N^{k}_0Bu\Vert_{L^2}<+\infty$. Since $N_0=\bra L_0 \ket$,
 this implies $\psi u\in H^k(M)$ as claimed.
\end{proof}

\subsection{Preliminaries on microlocal analysis} In view of  Proposition \ref{nakamura} we are left with  the task of proving sufficient local regularity  of $L^2$ solutions of $(P-i)u=0$. To that end we will need several basic notions from microlocal analysis.

 We will  write  $(x;\xi)=(t,y;\tau,\eta)$ for points in $T^*M$
 and $\zero$ for the zero section.
Let $p(x;\xi)$ be the principal symbol of $P$, and let  $\Sigma= p^{-1}(\{ 0\})$ be its characteristic set. It splits into two connected components, $\Sigma=\Sigma^+ \cup \Sigma^-$, where the sign convention is fixed by saying that in the special case when $p(x;\xi)=p_0(x;\xi)=-\tau^2 + \eta^2$, $\Sigma^\pm$ equals
$$
 \Sigma^\pm_0 = \{ (t,y;\tau,\eta)\in T^*M\setminus \zero \,\st\,     \tau =  \pm  \module{\eta}\}.
$$

Let us recall that \emph{bicharacteristics} are integral curves of the \emph{Hamilton vector field} $H_p$ of $p$, defined in terms of the Poisson bracket by $H_p=\{ p,\cdot \}$. For a pair of points $(x_i;\xi_i)\in T^*M \setminus \zero$, $i=1,2$, we write $(x_1;\xi_1) {\sim} (x_2;\xi_2)$ if $(x_1;\xi_1)\in \Sigma$  and $(x_2;\xi_2)$ can be joined from $(x_1;\xi_1)$ by a bicharacteristic in $\Sigma$. 

Recall that given $u\in  \cD'(M)$, its \emph{Sobolev wavefront set} $\wf^{(s)}(u)$ of order $s\in \rr$  is defined as follows: $(x;\xi)\in T^*M \setminus \zero$ is \emph{not} in $\wf^{(s)}(u)$ if and only if there exists a properly supported $B\in \Psi^0(M)$ (or equivalently, $B\in \Psi^m(M)$ for some $m\in\rr$) such that $B u \in H^s_\loc(M)$  (resp.~$B u \in H^{s-m}_\loc(M)$). 

Let us recall a special case of Hörmander's classical propagation of singularities theorem for real principal type operators ($P-z$ is of real principal type by global hyperbolicity of $(M,g)$, see e.g.~\cite[Prop.~4.3]{radzikowski1996micro}), formulated here in terms of the Sobolev wavefront set.  

\bep[{\cite[\S6.3]{DH}}]  Let $z\in \cc$ and suppose  $u\in \cD'(M)$ satisfies  $f:=(P-z) u\in H^{s-1}_\loc(M)$. If $(x;\xi) \in \wf^{(s)}(u)$, then $(x;\xi) \in \Sigma$, and furthermore $(x',\xi')\in \wf^{(s)}(u)$ for all  $(x',\xi')\in T^*M\setminus \zero$ such that $(x,\xi)\sim (x',\xi')$. 
\eep

Strictly speaking, the basic statement that $(x;\xi)\in\Sigma$ is   referred  to as \emph{microlocal elliptic regularity} or the \emph{elliptic estimate}, as it can indeed be written in the form of a uniform estimate.

\subsection{Proof of local regularity} \label{ss:locreg} 

 Let $V=P-P_0$. By hypothesis, $V$ is a second order differential operator with compactly supported coefficients.  Let $T>0$ be large enough so that $\supp V \subset [-T,T]\times Y$.

We start by showing a key lemma about microlocal regularity for large times. Although in the proof of essential self-adjointness we will only need a particular case with fixed $z$ and $f=0$, the general statement will be useful in the next section. For further reference the lemma is stated for general $P$ obtained with compactly supported perturbations of $P_0$.

\bel\label{lelemme} Let $P$ be a second order differential operator  such that $V=P-P_0$  has compactly supported coefficients. Assume $\beta=1$.  Let $(x_1;\xi_1)=(t_1,y_1;\tau_1,\eta_1)\in \Sigma^\pm$ be such that $\pm t_1 >T$.  Then  for $\Im z \geq \varepsilon>0$,   there exists a bounded family of properly supported pseudo-differential operators $B_\pm(z)\in \Psi^0(M)$, each elliptic at $(x_1;\xi_1)$ and such that for all 
$u\in L^2(M)$ satisfying $f\defeq  (P-z)u \in L^2_\c(M)$, 
\beq\label{reg1}
B_\pm(z) (u - (P_0-z)^{-1} f)=0.
\eeq
If in addition $\supp f \subset [-T_-,T_+]\times Y$ for some $T_+,T_->0$ and $\pm t_1> \pm T_\pm$ then
\beq\label{reg2}
B_\pm(z) u=0.
\eeq
\eel
\begin{proof}
For all $u\in  L^2(M)$, if $f=(P-z) u\in L^2_\c(M)$ then  $(P_0-z)u=f-V u$ as elements of $H^{-2}_{\rm loc}(M)$, and 
 \beq \label{ulo}
 u -(P_0-z)^{-1}f   =- (P_0-z)^{-1}   V u.
 \eeq 
Let  $A(z)=({-\Delta_h - z})^{1/2}$. Then $A(z)\in \Psi^1(M)$, and its principal symbol is $|\eta|^\12_h$ (cf.~the last paragraph in the proof of Lemma \ref{lelemme2}).  Setting $v= (\one \otimes A(z)^{-1}) V u$ and using the formula \eqref{eq:reso} for $(P_0-z)^{-1}$, extended to elements $H^{-2}_{\rm loc}(M)$ supported in a finite time interval, we obtain 
$$
 \big(u -(P_0-z)^{-1}f \big) (t) =    \12   \int_{\rr} {e^{-i\module{t-s} A(z)} }  v(s) ds.
$$
Since $\supp v \subset [-T,T]\times Y$, this implies that
\beq\label{eq:rrr}
 \big(u -(P_0-z)^{-1}f \big) (t)=    \12  e^{\mp i t A(z)}   \int_{\rr} {e^{\pm i s A(z)} } v (s) ds \,  \mbox{ for } \pm t >  T.
\eeq
 In consequence, 
$$
(D_t \pm  A(z))  \big(u -(P_0-z)^{-1}f \big) (t) = 0 \,  \mbox{ for } \pm t >  T.
$$
If in addition $\supp f \subset [T_-,T_+]\times Y$, then we can represent $(P_0-z)^{-1}f $ similarly as the r.h.s.~of \eqref{eq:rrr}. Hence $(D_t \pm  A(z))  (P_0-z)^{-1}f =0$ for $\pm t > \pm T_\pm$ and we conclude 
\beq\label{eq:ac}
(D_t \pm  A(z))  u(t) = 0 \,  \mbox{ for } \pm t >  \max\{T,\pm T_\pm\}.
\eeq
Now, let $(x_1;\xi_1)=(t_1,y_1;\tau_1,\eta_1)\in\Sigma^\pm$ be such that $\pm t_1 >T$. Although $D_t \pm  A(z)=(D_t \otimes \one) \pm (\one \otimes A(z))$ is not a pseudo-differential operator in $\Psi^1(M)$ (instead, it is in some larger class with rather bad properties), there exists  $B_0\in \Psi^0(M)$ properly supported such that  
$$
B_{\pm,1}(z):=B_0  (D_t \pm  A(z) ) \in \Psi^1(M),
$$
and such that $B_{\pm,1}(z)$ is elliptic at $(x_1;\xi_1)$. In fact, since $(t_1,y_1;\tau_1,\eta_1)\in \Sigma$, we have $\eta_1\neq 0$ (as $\eta_1=0$ would  imply  $\tau_1=0$), so we can choose $B_0=0$ microlocally in a conic neighborhood of  $\{ \eta_1=0 \}$ and $B_0=1$ in a punctured neighborhood of it, see \cite[Thm.~18.1.35]{HormanderIII}, cf.~the proof of \cite[(3), Prop.~6.8]{GS}. Finally, by composing  $B_{\pm,1}(z)$ with a suitable family $C(z)\in \Psi^{-1}(M)$, vanishing for $\pm t \leq T$ (resp.~for $\pm t \leq \max\{ T, \pm T_\pm\}$), we obtain $B_\pm(z):=C(z) B_{\pm,1}(z)$ with the desired uniformity in $\Psi^0(M)$ and satisfying \eqref{reg1} (resp.~\eqref{reg2}).
\end{proof}

\begin{remark} Lemma \ref{lelemme} is a microlocal regularity statement at large, but \emph{finite} times, and then our next step will be to deduce a corresponding statement for arbitrary times  by Hörmander's propagation of singularity theorem. In more general situations, one needs to start with a regularity statement at \emph{infinite} times, which motivates the use of radial propagation estimates or related methods \cite{vasyessential,nakamurataira,Nakamura2022,Nakamura2022a}. In these settings, the asymptotic analogues of the two conditions \eqref{eq:ac} can be thought as boundary conditions at infinity \cite{GWfeynman}:  these were shown by Taira to be satisfied in the case of the wave resolvent on asymptotically Minkowski spacetimes \cite{Taira2020a}.
\end{remark}

\bep \label{key1} 
 Assume $\beta=1$, and suppose $u\in L^2(M)$ satisfies $(P-i )u=0$. Then $u\in \cf(M)$.
\eep  
\begin{proof}  
For any $(x;\xi)\in \Sigma^\pm\cap \{  \pm t >  T \}$ we use Lemma \ref{lelemme} with $z=i$ and $f=0$, which gives existence of $B_\pm\in \Psi^0(M)$ elliptic at $(x;\xi)$ such that $B_\pm u =0$. Thus, $(x;\xi)\notin \wf^{(s)}(u)$ for all $s\in \rr$.  We conclude
$$
\wf^{(s)}(u)\cap \Sigma^\pm \cap \{  \pm t >  T \} = \emptyset.
$$
By propagation of singularities, this implies $\wf^{(s)}(u)\cap \Sigma^\pm=\emptyset$. Since $\wf^{(s)}(u)\subset \Sigma= \Sigma^+\cup \Sigma^-$ we deduce immediately $\wf^{(s)}(u)=\emptyset$ for all $s\in \rr$, hence $u\in \cf(M)$.
\end{proof}

Proposition \ref{key1} combined with Proposition \ref{nakamura}   implies  \eqref{criterion}. This concludes the proof of essential self-adjointness of $P$, hence the self-adjointness of $\square_g$ stated in Theorem \ref{theorem1} in the case  $\beta=1$. 

\subsection{Generalization to static spacetimes} Let us now discuss the adaptations needed to prove the essential self-adjointness in the case when the spacetime is not necessarily ultra-static, i.e.~when $\beta$ is not necessarily $1$. 

The unperturbed wave operator is then
$$
P_0=\beta^{-1}\p_{t}^2 - \Delta_h. 
$$
Thanks to the assumption $C < \beta< C^{-1}$, the multiplication operator $\beta$ is bounded with bounded inverse.  Let 
$$
\tilde P_0 = \beta^\12  P_0 \beta^\12, \quad \tilde P = \beta^\12 P \beta^\12, \quad \tilde \Delta_h =  \beta^\12 \Delta_h \beta^\12.
$$ 
Then, as observed in \cite{derezinski}, essential self-adjointness of $P$ is equivalent to essential self-adjointness of $\tilde P$. Furthermore, $$\tilde P_0=\p_{t}^2 - \tilde\Delta_h$$ with  $\tilde \Delta_h$ essentially self-adjoint, and the coefficients of $\tilde P - \tilde P_0$ are compactly supported. Therefore, we can repeat the arguments from \secs{sec2.1}{ss:locreg} to show the essential self-adjointness of $\tilde P_0$ and $\tilde P$, and hence of $P$.

 This concludes the proof of Theorem \ref{theorem1}.

In the next section we will be interested in the resolvent $(P-z)^{-1}$, which is not related in a straightforward way with the resolvent of $(\tilde P-z)^{-1}$. For this reason we will need a more direct approach. The key fact is that Lemma \ref{lelemme} remains valid for $P$ with $\beta \neq 1$, as shown below. 

\begin{lemma}\label{lelemme2} The assertion of Lemma \ref{lelemme} holds true for $P_0$ and $P$ without the assumption $\beta=1$.
\end{lemma}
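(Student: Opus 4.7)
My plan is to follow the blueprint of the proof of Lemma \ref{lelemme} almost verbatim, replacing the spectral square root $\sqrt{-\Delta_h - z}$ by the analogue adapted to the static background. Multiplying $(P_0 - z)u = f$ by $\beta$ (which commutes with $\partial_t^2$ since $\beta$ depends only on $y$) gives $(\partial_t^2 + A(z)^2)u = \beta f$ with $A(z)^2 := -\beta\Delta_h - \beta z$. First I would define $A(z)$ as a first-order properly supported pseudo-differential operator on $Y$, with principal symbol $\sqrt{\beta(y)}\module{\eta}_h$ and satisfying $A(z)^2 \equiv -\beta\Delta_h - \beta z$ modulo smoothing. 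One can construct it either as the standard pseudo-differential parametrix square root of the elliptic operator $-\beta(\Delta_h + z)$, or by functional calculus on the weighted space $L^2(Y,\beta^{-1}\module{h}^\12 dy)$, i.e.~the measure with respect to which $-\beta\Delta_h$ is self-adjoint; essential self-adjointness of the latter on $C^\infty_{\rm c}(Y)$ follows by the same Chernoff-type argument used for $\Delta_h$, exploiting completeness of $h$ and the bounds $C < \beta < C^{-1}$. For $\Im z \geq \varepsilon > 0$ this gives a bounded family $A(z) \in \Psi^1(Y)$ with $A(z)^{-1} \in \Psi^{-1}(Y)$ uniformly.

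Next I would recycle the Fourier computation from the proof of Lemma \ref{lelemme}: inverting $\partial_t^2 + A(z)^2$ in $t$ yields
$$
\bigl((P_0 - z)^{-1} f\bigr)(t) = \frac{i}{2}\int_\rr \frac{e^{-i\module{t-s} A(z)}}{A(z)}\,\beta f(s)\, ds,
$$
the direct analogue of \eqref{eq:reso}. Setting $v(s) := A(z)^{-1} \beta V u(s)$, which is compactly supported in $t \in [-T,T]$ since $V = P - P_0$ is, the identity $u - (P_0-z)^{-1}f = -(P_0-z)^{-1}V u$ becomes, for $\pm t > T$,
$$
\bigl(u - (P_0-z)^{-1} f\bigr)(t) = -\frac{i}{2}\, e^{\mp it A(z)} \int_\rr e^{\pm is A(z)} v(s)\, ds,
$$
so that $(D_t \pm A(z))\bigl(u - (P_0-z)^{-1} f\bigr) = 0$ for $\pm t > T$. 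Under the extra hypothesis $\supp f \subset [-T_-, T_+] \times Y$, applying the same reasoning to $(P_0-z)^{-1} f$ itself would give $(D_t \pm A(z)) u = 0$ for $\pm t > \max\{T, \pm T_\pm\}$, the exact analogue of \eqref{eq:ac}.

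For the final step, I would verify ellipticity at the chosen $(x_1;\xi_1) \in \Sigma^\pm$: the characteristic equation for $P$ away from $\supp V$ is $-\beta^{-1}\tau^2 + \module{\eta}_h^2 = 0$, so $\Sigma^\pm = \{\tau = \pm\sqrt{\beta(y)}\module{\eta}_h\}$ and $\eta_1 \neq 0$. The principal symbol of $D_t \pm A(z)$ is $\tau \pm \sqrt{\beta(y)}\module{\eta}_h$, which takes the value $\pm 2\sqrt{\beta(y_1)}\module{\eta_1}_h \neq 0$ at $(x_1;\xi_1)\in\Sigma^\pm$. Thereafter the construction of $B_0 \in \Psi^0(M)$ properly supported in the appropriate half-line in $t$ and microlocally the identity in a punctured conic neighborhood of $\{\eta=0\}$ around $(x_1;\xi_1)$, together with the composition $B_\pm(z) := C(z)\, B_0 \,(D_t \pm A(z))$ with some $C(z)\in\Psi^{-1}(M)$, is verbatim the construction from the proof of Lemma \ref{lelemme}.

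The hardest part will be the very first step, namely constructing $A(z)$ with uniform $z$-dependence: the operator $-\beta\Delta_h - \beta z$ fails to be self-adjoint on the canonical $L^2(Y,\module{h}^\12 dy)$ measure, so one must either reinterpret functional calculus on the weighted measure $\beta^{-1}\module{h}^\12 dy$ and invoke a Seeley-type theorem on the pseudo-differential nature of spectral powers, or perform the construction entirely symbolically as a parametrix. Once this is settled, the remainder is only a cosmetic modification of Lemma \ref{lelemme}, since the principal symbol $\sqrt{\beta(y)}\module{\eta}_h$ is real and positive away from $\eta=0$, which is precisely what makes $D_t \pm A(z)$ elliptic on $\Sigma^\pm$ regardless of $\beta$.
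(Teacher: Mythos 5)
Your overall strategy is the right one and coincides with the paper's: reduce to an exact factorization $\partial_t^2+A(z)^2$ of a $\beta$-modified operator and rerun the semigroup computation from the proof of Lemma \ref{lelemme}. The genuine gap is the construction of $A(z)$, which you correctly single out as the hard step but do not actually close. Your first option, ``functional calculus on the weighted space $L^2(Y,\beta^{-1}|h|^{1/2}dy)$,'' does not apply: on that space $-\beta\Delta_h$ is indeed essentially self-adjoint, but the operator whose square root is needed is $-\beta\Delta_h-z\beta$, and the perturbation $-z\beta$ is multiplication by a non-constant complex function that does not commute with $-\beta\Delta_h$; the sum is therefore not normal on any weighted $L^2$, so there is no spectral calculus with which to define $(-\beta\Delta_h-z\beta)^{1/2}$. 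Your second option, a symbolic parametrix with $A(z)^2\equiv-\beta\Delta_h-z\beta$ modulo smoothing, undercuts exactly what the lemma asserts: the conclusion $B_\pm(z)\bigl(u-(P_0-z)^{-1}f\bigr)=0$ rests on the \emph{exact} identity $(\partial_t-iA(z))(\partial_t+iA(z))=\partial_t^2+A(z)^2$ matching $\beta(P_0-z)$ exactly, hence on the exact integral formula for $(P_0-z)^{-1}$ and the exact relation $(D_t\pm A(z))\bigl(u-(P_0-z)^{-1}f\bigr)=0$ for $\pm t>T$. With a parametrix these hold only up to smoothing errors, and moreover $e^{-i|t-s|A(z)}$ has no a priori meaning as a bounded (let alone uniformly bounded in $z$) operator family when $A(z)$ is defined only symbolically.

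The paper closes this gap with Kato's theory of accretive square roots. Setting $L(z)=i(-\tilde\Delta_h-z\beta)$ with $\tilde\Delta_h=\beta^{1/2}\Delta_h\beta^{1/2}$ (unitarily equivalent to your weighted-space picture), the hypotheses $\Im z\geq\varepsilon$ and $C<\beta<C^{-1}$ give the coercivity $\Re(u\,|\,L(z)u)\geq c\,\|u\|_{L^2}^2$, so $L(z)$ is m-accretive and boundedly invertible; by \cite[\S 3, Thm.~3.35]{K1} it has a unique m-accretive square root, sectorial of angle $\pi/4$, and $A(z):=e^{-i\pi/4}L(z)^{1/2}$ then has $-iA(z)$ m-accretive, so that $t\mapsto e^{-itA(z)}$, $t\geq 0$, is a contraction semigroup and the resolvent formula can be verified exactly (one also checks separately that $i(\tilde P_0-z\beta)$ is m-accretive, so that the explicit integral operator really is the inverse). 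The fact that $A(z)\in\Psi^1(M)$ with the expected principal symbol --- which you need for the ellipticity of $D_t\pm A(z)$ at $(x_1;\xi_1)$ --- is then a separate, nontrivial verification via a Beals-type criterion. Your concluding ellipticity computation and the construction of $B_\pm(z)$ are fine once such an $A(z)$ is in hand, so the missing ingredient is precisely the accretive-square-root argument.
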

\begin{proof} Let $\Im z \geq \varepsilon>0$. In comparison with the case $\beta=1$, the main difference  is that the formula for the unperturbed resolvent $(P_0-z)^{-1}$ needs to be modified. We have indeed
\beq\label{eq:rel}
(P_0-z)^{-1}=\beta^\12 (\tilde P_0- z \beta )^{-1} \beta^\12,
\eeq
provided that we check that $\tilde P_0- z \beta=\p_{t}^2 - \tilde\Delta_h-z \beta$ is boundedly  invertible.

Let us first define $$L(z)\defeq i(- \tilde\Delta_h-z \beta), \mbox{ with domain }  \Dom L(z)\defeq\Dom (-\tilde\Delta_h).$$ Since $\beta$ is bounded, the operator $L(z)$ is closed. Furthermore,
\beq \label{eq:o}
\Re( u | L(z) u) = (\Im z) (u | \beta u) \geq \frac{1}{2}C^{-1}\varepsilon \| u \|_{L^2}, \ \ u \in\Dom L(z), 
\eeq
so $L(z)$ is m-accretive and $0\notin \sp(L(z))$. By \cite[\S3, Thm.~3.35]{K1}, $L(z)$ has a unique m-accretive square root $L(z)^\12$, which in addition is sectorial of angle $\frac{\pi}{4}$ and satisfies $0\notin \sp(L(z)^\12)$. It follows that if we set
\beq\label{er:ff}
A(z) \defeq e^{-i \pi/4} L(z)^\12,
\eeq
 then $0\notin \sp(A(z))$ and moreover,  $i A(z)$ is m-accretive. In consequence, $-i A(z)$ is the generator of a strongly continuous contraction semigroup denoted by $\rr_+ \ni t  \mapsto e^{-it A(z)}\in B(L^2(M))$. Let now
$$
(R(z)f)(t)=  \int_{\rr} {e^{-i\module{t-s}A(z)}}  A(z)^{-1} f(s) ds
$$
for $f\in L^2_{\rm c}(M)$. Since $(\one  \otimes A(z)^{-1})f\in L^2_{\rm c}(\rr; \Dom A(z))$, standard semigroup theory applies, and we get easily $R(z)f\in C^0(\rr;\Dom A(z))$, in particular $R(z)f$ is a distribution.  In the sense of distributions,
\beq\label{eq:reso2}
\bea
(\tilde P_0-z \beta) R(z) f&=(\p_t^2 + A(z)^2) R(z) f\\
&=(\p_t  - i A(z))(\p_t  + i A(z)) R(z) f = f
\eea
\eeq
for all $f\in L^2_\c(M)$. On the other hand, by a computation analogous to \eqref{eq:o} we obtain that the operator  $i(\tilde P_0-z \beta)$ with domain $\Dom \tilde P_0$ is m-accretive and boundedly invertible.  By applying its inverse to both sides of \eqref{eq:reso2} we obtain $(\tilde P_0-z \beta)^{-1} = R(z)$ on $L^2_\c(M)$. We conclude that $(P_0-z)^{-1}=\beta^\12 (\tilde P_0-z \beta)^{-1}\beta^\12 = \beta^\12 R(z) \beta^\12$ on $L^2_\c(M)$. In summary,
$$
((P_0-z)^{-1}f)(t)=  \beta^\12\int_{\rr} {e^{-i\module{t-s}A(z)}}  A(z)^{-1} \beta^\12 f(s) ds, \ \ f\in L^2_\c(M).
$$
From that point on we can repeat the proof  of Lemma \ref{lelemme} with $(D_t \pm  A(z))$ replaced by $(D_t \pm  A(z))\beta^{-\12}$, where $A(z)$ is defined in \eqref{er:ff}.  

This requires us to check that    $A(z)\in \Psi^1(M)$. In fact, we can show  in analogy to the proof of \cite[Prop.~4.7]{Helffer2005} that the resolvent $(L(z)-\lambda)^{-1}$ of $L(z)$ satisfies a variant of the Beals criterion in global Sobolev spaces defined using $-\tilde{\Delta}_h$. Then, for all $\chi_1,\chi_2\in \ccf$, $\chi_1 A(z) \chi_2$ can be expressed as an integral of $\chi_1 (L(z)-\lambda)^{-1}) \chi_2$ (see the proof of \cite[\S3, Thm.~3.35]{K1}). By repeating the arguments in the proof of \cite[Thm.~4.8]{Helffer2005} (with all relevant formulas multiplied by $\chi_1$ and $\chi_2$) we conclude that $A(z)\in \Psi^1(M)$, and  its principal symbol equals $\sigma_{\rm pr}\big( A(z)\big)(y;\eta)=|\eta|^\12_h(y)$.
\end{proof}

% \beta^{-1}\p_{t}^2 - \tilde\Delta_h 

%\begin{remark} We use in an essential way that regularity can be propagated. 
%\end{remark} 
 
\section{Uniform microlocal estimates} \label{section2}
 
\subsection{Uniform wavefront set}  Throughout this section we  will write $P=\square_g$ (rather than $P=U^* \square_g U$).

 We start by introducing the  uniform wavefront set  which appears in the formulation of Theorem \ref{theorem2}. 

\begin{definition}\label{defrrr} Let $Z\subset \cc$ and suppose $\{ G(z)\}_{z\in Z}$  is for all $m\in\rr$ a bounded family of operators in $B(H^m_\c(M),H^m_\loc(M))$.
The \emph{uniform operator wavefront set of order $s\in\rr$ and weight $\bra z\ket^{-\12}$} of $\{ G(z)\}_{z\in Z}$ is the set 
\beq\label{eq:wfs}
\wfl{12}\big( G(z) \big)\subset (T^*M\setminus\zero)\times (T^*M\setminus\zero)
\eeq
defined as follows:  $((x_1;\xi_1),(x_2;\xi_2))$ is \emph{not} in \eqref{eq:wfs} if and only if for all $\varepsilon>0$ there exists a uniformly bounded  family $B_i(z)\in \Psi^{0}(M)$ of  properly supported operators, each elliptic at $(x_i;\xi_i)$ and such that for all $r\in \rr$, the family
$$
\bra z\ket^{\12}B_1(z) G(z) B_2(z)^* \mbox{ for } z\in Z \mbox{ is bounded in } B(H^{r}_\c(M), H_\loc^{r+s}(M)).
$$
\end{definition}

We define the \emph{uniform operator wavefront set of order $s\in\rr$ and weight $1$} in the same way,  with $\bra z\ket^{\12}$ replaced by $1$, and we denote that set $\wfl{0}\big( G(z) \big)$ for simplicity.  Definition \ref{defrrr} is  similar to the definition from  \cite[\S3]{Dang2020}, with the only difference  that we allow $B_i$ to depend on $z$ (which is easier to verify in practice).

%\begin{definition}\label{defrrr} Let $\mu(z)>0$ be a function on $Z\subset \cc$.
%The \emph{uniform operator wavefront set of order $s\in\rr$ and weight $\mu\in \rr$} of $G(z)$ is the set 
%\beq\label{eq:wfs}
%\wfl{}\big( G(z) \big)\subset (T^*M\setminus\zero)\times (T^*M\setminus\zero)
%\eeq
%defined as follows:  $((x_1;\xi_1),(x_2;\xi_2))$ is \emph{not} in \eqref{eq:wfs} if and only if for all $\varepsilon>0$ there exists   properly supported $B_i\in \Psi^{0}(M)$ elliptic at $(x_i,\xi_i)$ such that for all $r\in \rr$, 
%$$
%(\mu(z))^{-1} B_1 G(z) B_2^* \mbox{ is bounded in } B(H^{r}_\c(M), H_\loc^{r+s}(M)) \mbox{ along } z\in \gamma_\varepsilon.
%$$
%\end{definition}

Let us denote by $\Delta^*$ be the diagonal in $(T^*M \setminus \zero)^{\times 2}$, i.e.
$$
\Delta^* = \{ ((x_1;\xi_1),(x_2;\xi_2))  \, | \,    x_1=x_2, \ \xi_1=\xi_2 \}   \subset (T^*M \setminus \zero)^{\times 2}.
$$

\begin{definition}\label{deffey} 
The  \emph{Feynman wavefront set} $\Lambda\subset (T^*M\setminus\zero)^{\times 2}$ is defined by
$$
\bea
 \Lambda & \defeq   \big( (\Sigma^+)^{\times 2} \cap   \{  ((x_1;\xi_1),\! (x_2;\xi_2))\, | \, (x_1;\xi_1)  \sim  (x_2;\xi_2) \mbox{ and } x_1\in J_-(x_2) \}\big)  \fantom \ \,  \cup  \big( (\Sigma^-)^{\times 2} \cap   \{  ((x_1;\xi_1),\! (x_2;\xi_2))\, | \, (x_1;\xi_1)  \sim  (x_2;\xi_2) \mbox{ and } x_1\in J_+(x_2) \} \big) \! \cup \Delta^*.
 \eea
$$
\end{definition}

In the definition we employed the convention  which corresponds to considering  \emph{primed} wavefront sets (as opposed to wavefront sets of Schwartz kernels). We caution the reader that beside the choice of  working with `primed' or 'non-primed' wavefront sets, in the context of QFT there are two sign conventions possible.

As in \cite{Dang2020} we will use the following version of Hörmander's propagation of singularities theorem, formulated in terms of the uniform wavefront set.
 
 \begin{proposition}\label{opprop} Let $Z\subset \{ z \in \cc \st \Im  z \geqslant  0\}$. Suppose that for all $m\in\rr$, $G(z)$ and  $(P-z)G(z)$ are bounded families of operators in  $B(H^m_\c(M),H^m_\loc(M))$ for $z\in Z$. Suppose
 \beq\label{eqt1}
 ((x_1;\xi_1),\!(x_2;\xi_2))\in \wfl{0}\big(G(z)\big)\setminus \wfl[s-1]{0}\big((P-z)G(z)\big).
 \eeq
Then $(x_1;\xi_1)\in \Sigma$.   Furthermore,  $((x'_1;\xi'_1),\!(x_2;\xi_2))\in\wfl{0}\big( G(z)\big)$ for all $(x'_1;\xi'_1)$ such that $(x'_1;\xi'_1)\sim (x_1;\xi_1)$ and $(x_1';\xi'_1)$ precedes $(x_1;\xi_1)$ along the bicharacteristic flow, provided that $((x;\xi),\!(x_2;\xi_2))\notin\wfl[s-1]{0}\big((P-z)G(z)\big)$ for all $(x;\xi)$ on the bicharacteristic connecting $(x_1;\xi_1)$ and $(x'_1;\xi'_1)$.
 \end{proposition}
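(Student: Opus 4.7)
I plan to prove this uniform, operator-valued version of H\"ormander's theorem by adapting the standard two-step scheme---microlocal elliptic regularity plus positive commutator propagation---to the setting of operator families parametrized by $z$. The operator-valued aspect is handled systematically by carrying a testing operator $B_2(z)^*$ on the right throughout: all inequalities take the form of bounds on $\|\cdots G(z)B_2(z)^*v\|$ for $v$ in a suitable Sobolev space, so that both conclusions reduce to uniform mapping properties of the form required by Definition \ref{defrrr}. The reference template is \cite[\S3]{Dang2020} with adaptations for the weight-$1$ setting.

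For the elliptic statement, assuming $(x_1;\xi_1)\notin\Sigma$, I would construct a $z$-dependent microlocal parametrix $Q_N(z)\in\Psi^{-2}(M)$ supported in a small conic neighborhood of $(x_1;\xi_1)$ and satisfying $B_1(z)=Q_N(z)(P-z)+R_N(z)$ for some $B_1(z)\in\Psi^0(M)$ elliptic at $(x_1;\xi_1)$ and $R_N(z)\in\Psi^{-N}(M)$ uniformly in $z\in Z$. Since the principal symbol $p$ is bounded away from $0$ on a conic neighborhood of $(x_1;\xi_1)$, the iterative inversion of $p-z$ is effective and all symbol estimates can be taken uniform in $z$ provided $N$ is large relative to $s$. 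Composing with $G(z)B_2(z)^*$ gives
\[
B_1(z)G(z)B_2(z)^* = Q_N(z)(P-z)G(z)B_2(z)^* + R_N(z)G(z)B_2(z)^*,
\]
and the right-hand side is uniformly bounded in the required operator space: the first term by the hypothesis $((x_1;\xi_1),(x_2;\xi_2))\notin\wfl[s-1]{0}((P-z)G(z))$, and the second by the uniform boundedness of $G(z)\in B(H^r_\c,H^r_\loc)$ combined with the smoothing effect of $R_N$.

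For the propagation part, I would implement the positive commutator method uniformly in $z$ along the bicharacteristic segment $\gamma$ connecting $(x'_1;\xi'_1)$ to $(x_1;\xi_1)$. Because the principal symbol of $P-z$ equals $p$ independently of $z$, the Hamilton flow and the escape-function construction are $z$-independent: pick $a\in S^{s-\12}(T^*M)$ supported in a thin conic tube around $\gamma$ with $H_p(a^2)$ dominating $b^2-c_1^2-c_2^2$ (with the sign dictated by the forward direction of regularity propagation), where $b$ is elliptic at $(x_1;\xi_1)$, $c_1$ is microsupported near $(x'_1;\xi'_1)$, and $c_2$ is microsupported at points where the hypothesis on $(P-z)G(z)$ provides regularity. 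Quantize $a,b,c_1,c_2$ into properly supported $A,B,C_1,C_2$ and set $u_z=G(z)B_2(z)^*v$. The standard pairing identity
\[
\tfrac{1}{i}\bigl([P,A^*A]u_z,u_z\bigr) + 2(\Im z)\|Au_z\|^2 = 2\,\Im\bigl(A(P-z)u_z,Au_z\bigr)
\]
combined with sharp G\aa{}rding yields, after dropping the nonnegative term $2(\Im z)\|Au_z\|^2$ (here the hypothesis $\Im z\geq 0$ is essential, as it gives the Feynman propagation direction),
\[
\|Bu_z\|^2 \leq \|C_1u_z\|^2 + \|C_2u_z\|^2 + C\|A(P-z)G(z)B_2(z)^*v\|\cdot\|Au_z\| + \text{(l.o.t.)},
\]
with all constants independent of $z\in Z$. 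The $C_1$ term is handled by iteration in Sobolev order starting from the a priori bound $G(z)\in B(H^r_\c,H^r_\loc)$; the $C_2$ and $A(P-z)G$ terms are controlled by the hypothesis on $(P-z)G(z)$.

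The main obstacle is the rigorous justification of the formal commutator identity and the sharp G\aa{}rding step in the operator-valued, uniform-in-$z$ setting: one must regularize $a$ by a tangential mollifier depending on a small parameter, verify that all pairings make sense (which requires the a priori control provided by the hypotheses on $G(z)$ and $(P-z)G(z)$), and pass to the limit uniformly in $z$, then iterate in the Sobolev order to bootstrap from the order $0$ bound up to the target gain $s$. Since all symbolic constructions are $z$-independent and the only $z$-dependent term carries the favorable sign given by $\Im z\geq 0$, uniformity in $z$ is preserved at each step.
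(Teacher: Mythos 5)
Your proposal follows essentially the same route as the paper: the paper likewise reduces the statement to the uniform-in-$z$ scalar estimate coming from the classical positive commutator proof (where $z$-uniformity holds because the symbol constructions are $z$-independent and the $\Im z\geq 0$ term has a favorable sign), and then transfers it to the operator wavefront set by applying the estimate to $G(z)B_2(z)^*v$ for $v$ in a bounded set. The paper merely cites this estimate rather than rederiving the commutator identity and sharp G{\aa}rding step, but the substance is identical.
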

 \begin{proof}  We explain the relationship to better known formulations for the sake of completeness, see \cite{Dang2020} for more details. In what follows, all pseudo-differential operators are assumed compactly supported.
 
  The proof of  propagation of singularities by positive commutator arguments \cite{Hormander1971} gives a uniform estimate of the following form.  Let  $s\in\rr$, $N\ll 0$. For any $B_1'\in\Psi^0(M)$ elliptic at $(x_1;\xi_1)$, and any $B\in \Psi^0(M)$ elliptic in a neighborhood of the bicharacteristic from $(x_1';\xi_1')$ to  $(x;\xi)$, we have
 \beq\label{est1}
 \|  B_1 u\|_{s}  \leqslant C (  \|  B_1' u\|_{s}   + \|  B (P-z) u \|_{s-1} + \norm{\chi u}_{N})
 \eeq
  uniformly for $u\in H^{N}_{\rm loc}(M)$ and $z\in Z$, where  $B_1\in \Psi^0(M)$ is some $\Psi$DO elliptic  at $(x_1;\xi_1)$ and $\chi \in \ccf$. Now, suppose $((x_1';\xi_1'),\!(x_2;\xi_2))\notin \wfl{0}(G(z))$. Then there exist $B_1'(z),B_2(z)\in\Psi^0(M)$ elliptic at respectively $(x_1';\xi_1'),(x_2;\xi_2)$ such that for any bounded subset $\cU\subset H^l_\c(M)$, the set $B_1'(z) G(z) B_2^*(z) \cU$ is uniformly bounded in $H^{l+s}_\loc(M)$. By \eqref{est1} applied to  elements of $G(z) B_2^* \cU$,  $B_1 G(z) B_2^*(z) \cU$ is bounded in $H^{l+s}_\loc(M)$, hence $ ((x_1;\xi_1),\!(x_2;\xi_2))\notin\wfl{0}(G_z)$.
 \end{proof}

 Note that $\wfl{0}(\one)=\Delta^*$ for large $s\in\rr$. Thus, if  $(P-z) G(z)=\one$, then Proposition \ref{opprop} says  that we can propagate singularities (or equivalently, regularity) of $G(z)$ along bicharacteristics in the first factor as long as they do not hit $\Delta^*$.

 There is an analogous statement for propagation in the second factor of $(T^*M\setminus\zero)^{\times 2}$ if $G(z)(P-z)$ is bounded in $B(H^m_\c(M),H^m_\loc(M))$. Namely, if
  \beq\label{eqt2}
  ((x_1;\xi_1),\!(x_2;\xi_2))\in \wfl{0}\big(G(z)\big)\setminus \wfl[s-1]{0}\big(G(z)(P-z)\big),
  \eeq
 then $(x_2;\xi_2)\in \Sigma$. Furthermore,  $((x_1;\xi_1),\!(x_2';\xi_2'))\in \wfl{0}(G(z))$   for all $(x_2';\xi_2')$ such that $(x_2';\xi_2')\sim (x_2;\xi_2)$, provided that $((x_1;\xi_1),\!(x;\xi))\notin\wfl[s-1]{0}\big(G(z)(P-z)\big)$ for all $(x;\xi)$ on the bicharacteristic connecting $(x_2;\xi_2)$ and $(x_2';\xi_2')$.

For $\varepsilon>0$, let $Z_\varepsilon\subset \cc$ be a ``punctured sector'' in the upper half-plane of the form
\beq 
Z_\varepsilon := \{ z \in \cc \,\st\,  \module{\arg  z - \pi/2} < \theta , \ \module{z}\geq \varepsilon\} 
\eeq
for some arbitrarily chosen $\theta\in \open{0,\pi/2}$.

\begin{proposition} If $Z=Z_\varepsilon$ with $\varepsilon >0$ then in Proposition \ref{opprop} we can replace $((x'_1;\xi'_1),\!(x_2;\xi_2))\in\wfl{0}\big( G(z)\big)$ by $((x'_1;\xi'_1),\!(x_2;\xi_2))\in\wfl[s-1/2]{12}\big( G(z)\big)$
\end{proposition}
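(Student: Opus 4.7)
The plan is to refine the positive-commutator proof underlying Proposition~\ref{opprop} by exploiting the additional positivity $\Im z \geq c_\theta \bra z\ket$ available on the punctured sector $Z_\varepsilon$, for some $c_\theta > 0$ depending only on $\theta$ and $\varepsilon$. This positivity---absent in the larger set $\{\Im z \geq 0\}$ of Proposition~\ref{opprop}---is what produces the $\bra z\ket^{\12}$ gain in the improved conclusion.

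The starting point is the identity, valid for any properly supported self-adjoint $A \in \Psi^{2s-1}(M)$ and sufficiently regular $u$,
$$
(u | i[P, A] u)_{L^2} = 2 \Im (A u | (P-z) u)_{L^2} + 2(\Im z)\, (u | A u)_{L^2},
$$
which follows from the formal self-adjointness of $P$. Relative to the classical H\"ormander argument, I take the commutant $A$ of order $2s - 1$ (rather than $2s$) and factor it as $A = \tilde B^* \tilde B$ with $\tilde B \in \Psi^{s - 1/2}(M)$ nonnegative and elliptic on a neighbourhood of the bicharacteristic arc joining $(x_1'; \xi_1')$ to $(x_1; \xi_1)$. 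H\"ormander's escape-function construction together with sharp G\aa rding yields a bound on the commutator by $\pm(\|B_- u\|^2_s - \|B_+ u\|^2_s)$ up to sharp errors, with $B_-, B_+$ elliptic at opposite ends of the bichar (and the sign of the escape function chosen to match the direction of propagation needed). Bounding the inhomogeneous term by Cauchy--Schwarz and AM-GM,
$$
2|\Im(A u | (P-z) u)_{L^2}| \leq (\Im z) \|\tilde B u\|^2_0 + (\Im z)^{-1} \|\tilde B (P-z) u\|^2_0,
$$
and using the exact identity term $2(\Im z)(u | A u)_{L^2} = 2(\Im z) \|\tilde B u\|^2_0$ to absorb the first piece thanks to $\Im z \geq c_\theta \bra z\ket$, I arrive at the improved microlocal estimate
$$
\|B_- u\|^2_s + c_\theta \bra z\ket \|\tilde B u\|^2_0 \lesssim \|B_+ u\|^2_s + \bra z\ket^{-1} \|\tilde B (P-z) u\|^2_0 + \|\chi u\|^2_N.
$$
Since $\tilde B$ has order $s - 1/2$ and is elliptic at every point of the bicharacteristic arc, the left-hand side dominates $\bra z\ket^{\12} \|B_1'' u\|_{s - 1/2}$ for any $B_1'' \in \Psi^{s - 1/2}(M)$ elliptic at $(x_1'; \xi_1')$, which is precisely the norm relevant for membership in $\wfl[s-1/2]{12}(G(z))$.

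Translating this estimate into wavefront-set language exactly as in the proof of Proposition~\ref{opprop}---applying to $u = G(z) B_2^*(z) f$ with $f$ in a bounded subset of $H^l_{\mathrm c}(M)$ and $B_2(z)$ chosen to witness the hypothesis at $(x_2; \xi_2)$---the $\bra z\ket^{\12}$-weighted left-hand side is exactly what is needed to conclude the improved $\wfl[s-1/2]{12}$-statement at $(x_1'; \xi_1')$. The source-type term $\|B_+ u\|_s$ on the right is controlled by a preliminary application of Proposition~\ref{opprop} itself at the relevant bichar endpoint; the inhomogeneous $(P-z)u$ term is handled by combining the hypothesis $\wfl[s-1]{0}((P-z) G)$ along the bicharacteristic with the crude polynomial bound $\|(P-z) u\|_{s-2} \lesssim \bra z\ket \|u\|_s$ (coming from $P - z$ having zeroth-order part $-z$), interpolating to the order $s - 1/2$ with weight $\bra z\ket^{-\12}$ the commutator estimate requires. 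The main obstacle I anticipate lies precisely in this translation step: reconciling the $\wfl[s-1]{0}$-phrased hypothesis with the $(s-1/2)$-order, $\bra z\ket^{-\12}$-weighted inhomogeneous term that the commutator argument naturally produces, and ensuring that all microlocal cutoffs remain compatible along the bicharacteristic and with $(x_2; \xi_2)$ in the second factor while constants remain uniform for $z \in Z_\varepsilon$; a regularisation argument in the spirit of the proof of Proposition~\ref{nakamura} may be needed to justify the Cauchy--Schwarz pairings at the operator level.
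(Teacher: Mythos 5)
Your overall strategy coincides with the paper's: the proposition is obtained by strengthening the positive-commutator estimate behind Proposition \ref{opprop}, keeping the nonnegative term $2(\Im z)(u\,|\,Au)_{L^2}=2(\Im z)\|\tilde Bu\|^2_0$ (with $A=\tilde B^*\tilde B$, $\tilde B$ of order $s-\frac12$) on the good side, and then using that $\Im z\geq c\,\langle z\rangle$ uniformly on $Z_\varepsilon$. The paper simply quotes the resulting estimate
$$
\|B_1u\|_s+(\Im z)^{\frac12}\|B_1u\|_{s-\frac12}\leq C\big(\|B_1'u\|_s+\|B(P-z)u\|_{s-1}+\|\chi u\|_N\big)
$$
from \cite{Dang2020} and concludes via the elementary inequality $\|B_1u\|_{s-\frac12}\leq C_1\langle z\rangle^{-\frac12}\big(\|B_1u\|_s+(\Im z)^{\frac12}\|B_1u\|_{s-\frac12}\big)$, valid uniformly on $Z_\varepsilon$. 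Your commutator identity and the exploitation of the $(\Im z)$-term are exactly the right mechanism.

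There is, however, a genuine gap in your handling of the cross term. The splitting $2|\Im(Au\,|\,(P-z)u)|\leq(\Im z)\|\tilde Bu\|^2_0+(\Im z)^{-1}\|\tilde B(P-z)u\|^2_0$ produces a source term which is, microlocally, $\langle z\rangle^{-1}\|(P-z)u\|^2_{s-\frac12}$: this sits half an order \emph{above} what the hypothesis on $(P-z)G(z)$ (phrased at order $s-1$ with weight $1$) controls, and that hypothesis carries no decay in $z$ that could be traded against the missing half derivative. The interpolation you propose cannot close this: $s-\tfrac12$ does not lie between $s-2$ and $s-1$, so no interpolation of the hypothesis with the crude bound $\|(P-z)u\|_{s-2}\lesssim\langle z\rangle\|u\|_s$ reaches order $s-\tfrac12$; one would need an a priori bound at order at least $s-\tfrac12$, which costs more regularity of $u$ than the argument supplies. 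The repair is to estimate the cross term by $\delta\|\tilde Bu\|^2_{1/2}+\delta^{-1}\|\tilde B(P-z)u\|^2_{-1/2}$ and absorb the first piece (of order $s$) into the positive commutator term for small $\delta$; this leaves the inhomogeneous term at order $s-1$ with no weight, i.e.\ exactly the paper's estimate, after which the sector inequality above gives the stated $\langle z\rangle^{-\frac12}$ gain. Note also that once $\Im z\geq 0$, the requirement $A=\tilde B^*\tilde B\geq 0$ \emph{forces} the sign of the escape function and hence the direction of propagation; it is not a free choice, though it happens to match the one-sided statement of Proposition \ref{opprop}.
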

\begin{proof}  The positive commutator argument   used to prove \eqref{est1} gives  actually the stronger estimate
 \beq\label{est2}
 \|  B_1 u\|_{s}  + (\Im z)^\12  \|  B_1 u\|_{s-\12} \leqslant C (  \|  B_1' u\|_{s}   + \|  B (P-z) u \|_{s-1} + \norm{\chi u}_{N}),
 \eeq
  see \cite{Dang2020} for more details.
Furthermore,   $$ \|  B_1 u\|_{s-\12}  \leq C_1 \bra z \ket^{-\12} (\|  B_1 u\|_{s}  + (\Im z)^\12  \|  B_1 u\|_{s-\12})$$ for some $C_1>0$ uniformly in $z\in  Z_\varepsilon$. Hence,
$$
\|  B_1 u\|_{s-\12}   \leqslant C_2  \bra z \ket^{-\12}   (  \|  B_1' u\|_{s}   + \|  B (P-z) u \|_{s-1} + \norm{\chi u}_{N}),
$$
and from that point on we can apply the argument recalled after \eqref{est1}.  
\end{proof}

\subsection{Uniform resolvent estimate} We first prove a basic estimate on regularity properties of $(P-z)^{-1}$, which later on enables us to use the operator formulation of propagation of singularities.

\begin{lemma} For all $m\geqslant 0$,  the family of operators $(P-z)^{-1}$, $\Im z >0$, is bounded in $B(H_\c^m(M),H_\loc^{m+1}(M))$.
\end{lemma}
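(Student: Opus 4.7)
The strategy is to reduce the boundedness of $(P-z)^{-1} \colon H^m_\c(M) \to H^{m+1}_\loc(M)$ to a microlocal regularity statement for $v := (P-z)^{-1} u$, which then follows by combining Lemma \ref{lelemme2} with elliptic regularity and H\"ormander's propagation of singularities. Since Theorem \ref{theorem1} gives essential self-adjointness of $P$, the resolvent is already bounded on $L^2(M)$ for $\Im z > 0$; for $u \in H^m_\c(M) \subset L^2(M)$ we therefore have $v \in L^2(M) \subset \mathcal{D}'(M)$, and the task is to upgrade this to $v \in H^{m+1}_\loc(M)$.

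First I would apply microlocal elliptic regularity off $\Sigma$: since $(P-z)v = u \in H^m(M)$ and the principal symbol of $P-z$ is non-vanishing off the characteristic set, we obtain $\wf^{(m+1)}(v) \subset \Sigma = \Sigma^+ \cup \Sigma^-$. To eliminate wavefront on $\Sigma$, I would choose $T>0$ large enough that $\supp u \cup \supp V \subset [-T,T] \times Y$ and then apply Lemma \ref{lelemme2} in the second form \eqref{reg2} with $f = u$ and $T_\pm = T$. That furnishes, for every $(x_1;\xi_1) \in \Sigma^\pm$ with $\pm t_1 > T$, a properly supported $B_\pm(z) \in \Psi^0(M)$ elliptic at $(x_1;\xi_1)$ satisfying $B_\pm(z) v = 0$, so $v$ is microlocally $C^\infty$ at every such point.

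Finally, I would propagate this smoothness throughout $\Sigma$ using H\"ormander's theorem, noting that $P-z$ is of real principal type by global hyperbolicity of $(M,g)$. Since $\wf^{(m)}((P-z)v) = \wf^{(m)}(u) = \emptyset$, propagation loses no Sobolev order. The key geometric input, and the step I expect to require the most care, is that every bicharacteristic in $\Sigma^\pm$ eventually enters the region $\{\pm t > T\}$: inside the compact set $\supp V$ no bicharacteristic can be imprisoned, by global hyperbolicity of $(M,g)$; and once it exits $\supp V$ the metric reduces to $g_0$, along whose null bicharacteristics the $t$-component has non-vanishing monotone speed and hence escapes to $\pm\infty$. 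Combining this with the smoothness established at $\{\pm t > T\}$ yields $\wf^{(m+1)}(v) = \emptyset$, whence $v \in H^{m+1}_\loc(M)$. The continuity of the induced map $H^m_\c(M) \to H^{m+1}_\loc(M)$ then follows from the closed graph theorem, using that $(P-z)^{-1}$ is already continuous on $L^2(M)$ and that the relevant inclusions into $L^2_\loc(M)$ are continuous.
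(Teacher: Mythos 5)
Your microlocal argument is essentially the paper's: apply Lemma \ref{lelemme2} in the form \eqref{reg2} to kill the Sobolev wavefront set of $v=(P-z)^{-1}u$ on $\Sigma^\pm\cap\{\pm t>T\}$, use the elliptic estimate to confine $\wf^{(m+1)}(v)$ to $\Sigma$, and then sweep out all of $\Sigma$ by H\"ormander's propagation of singularities. Your explicit justification that every bicharacteristic in $\Sigma^\pm$ eventually reaches $\{\pm t>T\}$ (non-trapping inside $\supp V$ by global hyperbolicity, monotone $t$-speed outside) is a step the paper leaves implicit, and it is welcome.

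There is, however, a genuine gap in your last step. The lemma asserts that the \emph{family} $\{(P-z)^{-1}\}_{\Im z>0}$ is bounded in $B(H^m_\c(M),H^{m+1}_\loc(M))$, i.e.\ the relevant seminorm estimates hold with constants uniform in $z$; this uniformity is precisely what is needed to feed $(P-z)^{-1}$ into Definition \ref{defrrr} and Proposition \ref{opprop}. Your closed graph theorem argument only yields continuity of $(P-z)^{-1}\colon H^m_\c\to H^{m+1}_\loc$ for each fixed $z$, and gives no control on how the operator norm depends on $z$ --- note in particular that the input you invoke, boundedness of $(P-z)^{-1}$ on $L^2$, degenerates like $(\Im z)^{-1}$ and is itself not uniform over $\Im z>0$. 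The paper instead extracts the uniformity from the quantitative form of each microlocal ingredient: Lemma \ref{lelemme} produces a \emph{bounded} family $B_\pm(z)\in\Psi^0(M)$ with $B_\pm(z)v=0$, and the elliptic and propagation estimates (cf.\ \eqref{est1}) hold with constants uniform in $z$, so that the $H^{m+1}_\loc$-seminorms of $(P-z)^{-1}f$ are bounded by $H^m_\c$-seminorms of $f$ uniformly in $z$. To close the gap you should replace the closed graph argument by this tracking of constants through the estimates.
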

\begin{proof} This can be shown in a similar vein as Proposition \ref{key1}.   Namely, let $f\in H_\c^m(M)$. By Lemma \ref{lelemme},  for every  $(x_1;\xi_1)=(t_1,y_1;\tau_1,\eta_1)\in \Sigma^\pm$ with  $\pm t_1$ sufficiently large there exists a bounded family  $B_\pm(z)\in \Psi^0(M)$ such that $B_\pm(z)$ is elliptic at $(x_1;\xi_1)$ and
\beq\label{rtt}
B_\pm(z) (P-z)^{-1}f = 0,
\eeq
hence  $(x_1;\xi_1)\notin \wf^{(m+1)}((P-z)^{-1} f)$ by \eqref{rtt}. Since $\wf^{(m)}(f)=\emptyset$, by propagation of singularities  applied to $(P-z)^{-1}f$ we get $(x;\xi)\notin  \wf^{(m+1)}((P-z)^{-1} f)$ for all $(x;\xi)$ such that $(x;\xi)\sim (x_1;\xi_1)$ and $\pm t \leq \pm t_1$. In conclusion, $\Sigma^\pm \cap \wf^{(m+1)}((P-z)^{-1} f)= \emptyset$. On the other hand $\wf^{(m+1)}((P-z)^{-1} f) \subset \Sigma = \Sigma^+ \cup \Sigma^-$ by elliptic regularity. Hence  $\wf^{(m+1)}((P-z)^{-1} f)=\emptyset$, which yields $(P-z)^{-1} f\in H^{m+1}_\loc(M)$. By the uniformity of propagation estimates and of the elliptic estimate, $H^{m+1}_\loc(M)$-seminorms of $(P-z)^{-1} f$ are bounded by $H^{m}_\c(M)$-seminorms of $f$, uniformly in $z$. 
\end{proof}

%\begin{remark} As expected in a hyperbolic problem, there is a loss of regularity as compared its  elliptic analogue (in which case the resolvent maps continuously $H^m_\c(M)\to H^{m+2}_\loc(M)$).
%\end{remark}

We are now ready to prove that the uniform operator wavefront set of $(P-z)^{-1}$ in $Z_\varepsilon$ is contained in the Feynman wavefront $\Lambda$.

\medskip 

\refproof{Theorem \ref{theorem2}} \step{1}  Let $(x_1;\xi_1)=(t_1,y_1;\tau_1,\eta_1)\in \Sigma^\pm$ with  $\pm t_1>T$ (where $T$ is as in \sec{ss:locreg}) and let $(x_2;\xi_2)=(t_2,y_2;\tau_2,\eta_2)\in T^*M \setminus \zero$ be such that $\pm t_1 >\pm t_2$. Then by Lemma \ref{lelemme}, there exists a bounded family  $B_\pm(z)\in \Psi^0(M)$ such that $B_\pm(z)$ is elliptic at $(x_1;\xi_1)$ and
$$
B_\pm(z) \circ    (P-z)^{-1} \circ \chi  = 0
$$ 
for some $\chi\in \ccf$ with   $\chi(x_2)\neq 0$, provided that $\supp \chi$ is a sufficiently small neighborhood of $x_2$. This implies $((x_1;\xi_1),(x_2;\xi_2))\notin \wfl{0}\big(   (P-z)^{-1}   \big)$ for all $s\in\rr$.  In conclusion,
\beq\label{rtrt1}
\big( \Sigma^\pm\times (T^*M\setminus\zero) \big)\cap \{ \pm t_1 > T, \ \pm t_1 > \pm t_2\}   \cap \wfl{0}  \big( ( P-z)^{-1} \big) = \emptyset.
\eeq

\step{2} Next, we use propagation of singularities to deduce
\beq\label{rtrt2}
\big( \Sigma^\pm\times (T^*M\setminus\zero) \big)  \cap \wfl{12}  \big( ( P-z)^{-1} \big) \subset \Lambda.
\eeq
More precisely, let $(x;\xi)\in \Sigma^\pm$ and suppose $(x_2;\xi_2)\in T^*M \setminus\zero$ is such that
\beq\label{eq:tco}
((x;\xi),\!(x_2;\xi_2)) \in  \wfl{12}  \big( ( P-z)^{-1} \big)\setminus \Lambda.
\eeq
Since $(x;\xi)\in \Sigma^\pm$ and $((x;\xi),\!(x_2;\xi_2))  \notin \Lambda$, we can find $(x_1;\xi_1)=(t_1,y_1;\tau_1,\eta_1)\in \Sigma^\pm$ with $\pm t_1>\max\{T, \pm t_2\}$ such that $(x_1;\xi_1)\sim (x;\xi)$  and  $(x_2;\xi_2)$ does not intersect the bicharacteristic connecting $(x_1;\xi_1)$ and $(x;\xi)$. By  \eqref{rtrt1},  $((x_1;\xi_1),\!(x_2;\xi_2))\notin  \wfl{0}  \big( ( P-z)^{-1} \big)$. By propagation of singularities in the form given in Proposition \ref{opprop} this implies  $((x;\xi),\!(x_2;\xi_2))\notin  \wfl{12}  \big( ( P-z)^{-1} \big)$, which contradicts \eqref{eq:tco}. The argument is valid for any $(x;\xi)\in \Sigma^\pm$, so we  conclude  \eqref{rtrt2}.

\step{3}  By proceeding analogously in the second factor, we obtain 
\beq\label{rtrt3}
\big( (T^*M\setminus\zero)\times \Sigma^\pm \big)  \cap \wfl{12}  \big( ( P-z)^{-1} \big) \subset \Lambda.
\eeq
In combination with the two versions of identity \eqref{rtrt2} this yields
\beq \label{yyy}
(  \Sigma\times \Sigma)  \cap \wfl{12}  \big( ( P-z)^{-1} \big) \subset \Lambda.
\eeq
On the other hand, by the elliptic regularity statement in Proposition \ref{opprop} and its analogue in the second factor, we have
$$
   \wfl{12}   \big( ( P-z)^{-1} \big)  \subset  (\Sigma \times \Sigma) \cup \Delta^*.
$$
Thus \eqref{yyy} implies 
$
   \wfl{12}   \big( ( P-z)^{-1} \big)  \subset    \Lambda \cup \Delta^* = \Lambda,
$
which concludes the proof. \qed

\medskip

{\small
\subsubsection*{Acknowledgments} The authors would like to thank Nguyen Viet Dang and Kouichi Taira for stimulating discussions.  The research leading to these results has received funding from the European Union's Horizon 2020 research and innovation programme under the Marie Sk{\l}odowska-Curie grant agreement No 754340.  Support from the grant
ANR-20-CE40-0018 is gratefully acknowledged. \medskip }

\bibliographystyle{abbrv}
\bibliography{complexpowers}

\end{document}